\newtheorem{Theorem}{Theorem}[section]
\newtheorem{Proposition}[Theorem]{Proposition}
\newtheorem{Lemma}[Theorem]{Lemma}
\theoremstyle{definition}
\theoremstyle{remark}
\numberwithin{equation}{section}
\newcommand{\R}{{\mathbb R}}
\newcommand{\C}{{\mathbb C}}
\newcommand{\tr}{{\textrm{\rm tr}\:}}
\renewcommand{\Im}{{\textrm{\rm Im}\:}}
\renewcommand{\Re}{{\textrm{\rm Re}\:}}
\begin{document}

\title[Reflectionless operators]{Reflectionless Dirac operators and matrix valued Krein functions}

\author{Christian Remling}

\address{Department of Mathematics\\
University of Oklahoma\\
Norman, OK 73019}
\email{christian.remling@ou.edu}
\urladdr{www.math.ou.edu/$\sim$cremling}

\date{December 1, 2024}

\thanks{2020 {\it Mathematics Subject Classification.} 34L40 81Q10}

\keywords{Dirac operator, reflectionless operator, Krein function, trace formula}

%\thanks{MF was partially supported by Simons Foundation award 638026.}
%\dedicatory{Happy Birthday, Barry!}
\begin{abstract}
I prove a sharp bound on reflectionless Dirac operators.
\end{abstract}
\maketitle
\section{Introduction}
This brief note is a spin-off of \cite{Remauto}. Its goal is to prove Theorem \ref{T1.2} below.
I originally tried to do this using the machinery of \cite{Remauto}, but I then realized
that the rather different methods from \cite{ClGes,GesSim} work much better for this. Incidentally, similar remarks apply to some of the results
of \cite{HMcBR,Remuniq}. So it seems to make sense to split this part off and present it separately here.

We consider Dirac equations
\begin{equation}
\label{Dirac}
Jy'(x) + W(x)y(x) = -zy(x) , \quad J = \begin{pmatrix} 0 & -1\\ 1&0 \end{pmatrix} ,
\end{equation}
and the associated operators $Ly=-Jy'-Wy$ on $L^2(\R;\C^2)$.
We assume that $W(x)\in\R^{2\times 2}$, $W(x)=W^t(x)$, $W\in L^1_{\textrm{loc}}(\R)$. Then $L$ is self-adjoint on its natural maximal domain
\[
D(L) = \{ y\in L^2(\R;\C^2): y \textrm{ absolutely continuous, } Jy'+Wy\in L^2 \} .
\]

The \textit{Titchmarsh-Weyl }$m$ \textit{functions }may be defined as
\begin{equation}
\label{defm}
m_{\pm}(z) = \pm y_{\pm}(0,z) ,
\end{equation}
and here $z\in\C^+=\{z\in\C : \Im z>0\}$ and $y_{\pm}(x,z)$ denotes the unique, up to a constant factor, solution $y$ of \eqref{Dirac} that
is square integrable on $\pm x>0$. On the right-hand side of \eqref{defm}, we also use the convenient convention of identifying
a vector $y=(y_1,y_2)^t\in\mathbb C^2$, $y\not= 0$, with the point $y_1/y_2\in\mathbb C_{\infty}$
on the Riemann sphere. So $m_{\pm}$ take values in $\C_{\infty}$, and in fact these functions are \textit{Herglotz functions, }that is,
they map the upper half plane $\mathbb C^+$ holomorphically back to itself.

Clearly, each of $m_{\pm}$ refers to one half line only. Of course, both of them combined contain all the information on the whole line problem,
so it must be possible to obtain a spectral representation of $L$ from $m_+$ and $m_-$, and usually one proceeds as follows:
combine $m_{\pm}$ into one matrix function
\begin{equation}
\label{defMmatrix}
M(z) = \frac{-1}{m_+(z)+m_-(z)} \begin{pmatrix} -2m_+(z)m_-(z) & m_+(z)-m_-(z) \\ m_+(z) - m_-(z) & 2 \end{pmatrix} .
\end{equation}
Then $M(z)$ is a \textit{matrix valued Herglotz function, }that is, $M(z)$ is holomorphic on $\C^+$ and we still have $\Im M(z)>0$ there,
where we now define $\Im M =(M-M^*)/(2i)$. Please see \cite{GesTse} for a comprehensive discussion of matrix valued Herglotz functions in general.

Our function has the additional properties $M=M^t$, so maps into what is often called the
\textit{Siegel upper half space, }and $\det M(z)=-1$ for all $z\in\C^+$.

The $M$ matrix provides a spectral representation of the Dirac operator $L$ in the sense that $L$ is unitarily equivalent to
multiplication by the variable in $L^2(\R, d\rho)$ on the natural domain of this operator, and here the (matrix valued) \textit{spectral measure }$\rho$
is the measure from the Herglotz representation of $M$:
\[
M(z) = A+Bz +\int_{-\infty}^{\infty} \left( \frac{1}{t-z}-\frac{t}{t^2+1}\right)\, d\rho(t)
\]

The function $M(z)$ or, equivalently, the pair of functions $m_{\pm}(z)$ does not determine $W(x)$ uniquely; such a one-to-one correspondence
can be obtained if $W$ is suitably normalized, which can be done in various ways. In this paper, I will work with the $\tr W=0$ normalization throughout.
We can then write
\begin{equation}
\label{1.7}
W(x) = \begin{pmatrix} p(x) & q(x) \\ q(x) & -p(x) \end{pmatrix} .
\end{equation}
These issues are discussed in more detail in the standard literature on the subject \cite{LevSar} and also in \cite[Section 2]{RemZ}, from a more
abstract point of view.

We say that $W$ or $L=L_W$ is \textit{reflectionless }on a Borel set $A\subseteq\R$ if
\begin{equation}
\label{refless}
m_+(t)= -\overline{m_-(t)}
\end{equation}
for (Lebesgue) almost every $t\in A$. Here, $m_{\pm}(t)\equiv\lim_{y\to 0+} m_{\pm}(t+iy)$; these limits exist at almost all $t\in\R$.
Reflectionless operators are important because they provide the basic building blocks for arbitrary operators with some absolutely continuous spectrum
\cite{RemAnn}, \cite[Ch.\ 7]{Rembook}.

We also define, for closed sets $E\subseteq\R$,
\begin{align*}
\mathcal R(E) & =\{ W: L_W\textrm{ is reflectionless on }E \} , \\
\mathcal R_0(E) & = \{ W\in\mathcal R(E) : \sigma(W)\subseteq E \} .
\end{align*}
We will focus on \textit{finite gap sets}
\begin{equation}
\label{defE}
E = \R \setminus \bigcup_{j=1}^n (a_j,b_j) ,
\end{equation}
with $a_1<b_1<a_2<\ldots < b_n$, though the arguments below can also handle more general situations.

As in the scalar case, we can take the (matrix valued) logarithm of a matrix valued Herglotz function to obtain a new Herglotz function.
We will review the details of the procedure in Section 2. The new function $\log M(z)$ has bounded imaginary part; in fact $0<\Im\log M(z)< \pi$
for a suitable choice of the logarithm,
and this implies that the representing measure of $\log M(z)$ is purely absolutely continuous. Its matrix valued density $\xi(t)\in\R^{2\times 2}$, $0\le\xi (t)\le 1$,
is called the \textit{Krein function }of $M(z)$. We have
\begin{equation}
\label{expM}
\log M(z) = A + \int_{-\infty}^{\infty} \left( \frac{1}{t-z}-\frac{t}{t^2+1}\right)\xi(t)\, dt ,
\end{equation}
with $A=\Re\log M(i)\in\R^{2\times 2}$, $A=A^t$.

The Krein function is a standard tool in the scalar case but it has not been used much for matrix valued Herglotz functions. It is easy to understand why this is so:
if, let's say, $\xi(t)=0$ or $\xi(t)=1$ in the scalar setting, then obviously $m(t)=|m(t)|e^{i\pi\xi(t)}$ is real. However,
if $\xi(t) = P$, a projection, in the matrix valued case, then we cannot automatically conclude that $M(t)$ is real even though $\xi(t)$ still has eigenvalues
$0$ and $1$. More precisely, $M(t)$ will be real only if $\Re\log M(t)$ commutes with $P$. So the converse of Proposition \ref{P1.1}(c) below fails badly.

These issues might deserve further investigation in a general framework. I will not try to do this here.
For my current purposes, the following straightforward properties of $\xi$ will be sufficient.
\begin{Proposition}
\label{P1.1}
(a) For any $W$, we have $\tr \xi(t) =1$, $t\in\R$.\\
(b) \cite{ClGes} $W\in\mathcal R(E)$ if and only if $\xi(t)=1/2$ on $t\in E$.\\
(c) For $t\notin\sigma(L)$, the Krein function is a projection:
\[
\xi(t) = P_{\alpha}=\begin{pmatrix} \cos^2\alpha & \sin\alpha\cos\alpha \\ \sin\alpha\cos\alpha & \sin^2\alpha \end{pmatrix}
\]
for some $\alpha=\alpha(t)$.
\end{Proposition}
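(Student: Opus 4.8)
The plan is to read all three statements off the boundary behavior of the eigenvalues of $M(z)$ as $z\to t+i0$. The one fact I will use throughout is the Herglotz recovery of the density from \eqref{expM}: for a.e.\ $t$ one has $\Im\log M(t+i0)=\pi\,\xi(t)$ (with $\Im$ the Hermitian imaginary part $(\cdot-\cdot^*)/2i$), while $\Re\log M(t+i0)$ is a real symmetric matrix. For part (a) I would start from $\det M(z)=-1$. Since $\det\exp=\exp\tr$, the matrix logarithm obeys $\tr\log M(z)=\log\det M(z)$; as $|\det M|=1$ and the branch is pinned by $0<\Im\log M(z)<\pi$ (forcing $0<\Im\tr\log M<2\pi$), this gives $\tr\log M(z)\equiv i\pi$. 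Taking the trace in \eqref{expM} makes $\tr\log M$ a scalar Herglotz function whose a.c.\ density is $\tr\xi(t)$; its boundary imaginary part is the constant $\pi$, so the Fatou limit yields $\tr\xi(t)=\frac1\pi\cdot\pi=1$.

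For part (b) the point is a dictionary between the reflectionless condition \eqref{refless} and the shape of $M(t)$. Substituting $m_+(t)=-\overline{m_-(t)}$ into \eqref{defMmatrix}, a direct computation shows that $m_++m_-$ is purely imaginary and $m_+-m_-$ is real, whence $M(t)=iN$ with $N\in\R^{2\times2}$ real symmetric, $N>0$, and (from $\det M=-1$) $\det N=1$. Then both eigenvalues of $M(t)$ have the form $i\lambda$ with $\lambda>0$, so each principal logarithm contributes imaginary part $\pi/2$; hence $\Im\log M(t)=\frac{\pi}{2}I$ and $\xi(t)=\tfrac12$. Conversely, $\xi(t)=\tfrac12$ means $\Im\log M(t)=\frac{\pi}{2}I$, so $M(t)=\exp(\Re\log M(t))\exp(i\tfrac{\pi}{2}I)=iN$ with $N=\exp(\Re\log M(t))$ real symmetric positive definite. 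Solving \eqref{defMmatrix} for $m_\pm$ from such an $M(t)=iN$ (the $(2,2)$ and $(1,2)$ entries force $m_++m_-$ imaginary and $m_+-m_-$ real) returns $m_+=-\overline{m_-}$. This reproduces the equivalence of \cite{ClGes}.

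For part (c), if $t\notin\sigma(L)$ then $M$ continues analytically across $t$ and $M(t)$ is real symmetric with $\det M(t)=-1$, so its eigenvalues are real of opposite sign, $\lambda_1>0>\lambda_2$ with $\lambda_1\lambda_2=-1$, in particular simple. Because $\Im M(z)>0$ on $\C^+$, every eigenvalue of $M(z)$ lies in $\C^+$ (if $M(z)v=\lambda v$ then $\Im\lambda\,\|v\|^2=v^*(\Im M(z))v>0$), so as $z\to t$ the eigenvalue tending to $\lambda_1$ approaches the positive axis with argument $\to0$, and the one tending to $\lambda_2$ approaches the negative axis with argument $\to\pi$. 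Since the eigenvalues are simple at $t$, the Riesz eigenprojections $P_1,P_2$ are analytic near $t$ and real symmetric at $t$, and the holomorphic functional calculus gives $\log M(t)=\log\lambda_1\,P_1+(\log|\lambda_2|+i\pi)P_2$. Hence $\Im\log M(t)=\pi P_2$ and $\xi(t)=P_2$, a rank-one real orthogonal projection, i.e.\ $P_\alpha$ for the angle $\alpha$ of the $\lambda_2$-eigenline.

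I expect the main obstacle to be the branch and boundary bookkeeping for the matrix logarithm in (c): confirming that the eigenvalue approaching the negative real axis contributes exactly $\pi$, and the other exactly $0$, to $\Im\log M(t)$, and justifying the eigenprojection expansion up to the boundary $z\to t+i0$. Once that limiting argument is in place, (a) is the trace identity and (b) is the short linear-algebra translation above.
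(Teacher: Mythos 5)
Your proof is correct and follows essentially the same route as the paper: (a) from $\det M=-1$ together with the trace of the Herglotz representation, (b) from identifying the reflectionless condition with $M(t)=iN$, $N$ real symmetric positive definite, and (c) from the spectral decomposition of the real symmetric boundary value $M(t)$, whose eigenvalues have opposite signs since $\det M(t)=-1$, so that $\xi(t)$ is the projection onto the negative eigenspace. The only cosmetic difference is that the paper pins down $\log M(t)$ in (b) and (c) by the uniqueness statement of Lemma~\ref{L2.1} instead of your boundary-limit bookkeeping with Riesz projections and eigenvalue arguments.
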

When combined with the methods of \cite{ClGes,GesSim}, this will imply the following result. Recall here that if $W\in\mathcal R(E)$, $\tr W=0$,
then $W(x)$ is real analytic \cite[Theorem 4.1]{RemZ}, so it makes sense to evaluate this (matrix) function pointwise.
\begin{Theorem}
\label{T1.2}
If $W\in\mathcal R(E)$, $\tr W(x)=0$, with $E$ as in \eqref{defE}, then
\begin{equation}
\label{1.6}
\|W(x)\| \le \frac{1}{2} \sum_{j=1}^n (b_j-a_j) .
\end{equation}
Moreover, equality at a single $x_0\in\R$ implies that $W\in\mathcal R_0(E)$, and for any fixed $x=x_0\in\R$, there are such $W\in\mathcal R_0(E)$
for which \eqref{1.6} holds with equality.
\end{Theorem}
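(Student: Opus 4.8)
The plan is to recast the whole statement as a trace formula expressing $W(x)$ through the Krein function, and then read off everything from it. Throughout I work with the translated operators $L_{W(\cdot+x)}$, writing $m_\pm(z;x)$, $M(z;x)$, $\xi(t;x)$ for the resulting half-line $m$-functions, $M$-matrix and Krein function; since $\mathcal R(E)$ is translation invariant these stay in the class. The first step is a high-energy expansion as $z\to i\infty$. From the Riccati equation satisfied by $m_\pm(z;x)$ in the base point $x$, together with the leading value $m_\pm\to i$ (cf.\ the free computation giving $M=iI$), I expect $m_+(z;x)=i-(q+ip)/z+O(1/z^2)$ and $m_-(z;x)=i+(q-ip)/z+O(1/z^2)$, with $p,q$ the entries of $W$ in \eqref{1.7}. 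Substituting into \eqref{defMmatrix} and collecting the $1/z$ term gives $M(z;x)=iI+M_1(x)/z+O(1/z^2)$ with $M_1(x)=-iW(x)$, i.e.\ $iM_1(x)=W(x)$.

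Next I feed this into \eqref{expM}. By Proposition \ref{P1.1}(a),(b) the matrix $\eta(t;x):=\xi(t;x)-\tfrac12 I$ is real symmetric and trace-free, satisfies $-\tfrac12 I\le\eta\le\tfrac12 I$, and vanishes on $E$, so it is supported on the bounded set $\bigcup_j(a_j,b_j)$. Splitting off the constant part $\tfrac12 I$ of $\xi$ (which reproduces the free value $\log(iI)=\tfrac{i\pi}{2}I$) collapses \eqref{expM} to
\[
\log M(z;x)=\tfrac{i\pi}{2}I+\int_{\cup_j(a_j,b_j)}\frac{\eta(t;x)}{t-z}\,dt .
\]
Matching the $1/z$ coefficients of the two expansions of $\log M(z;x)$ then yields the trace formula $W(x)=\int_{\cup_j(a_j,b_j)}\bigl(\xi(t;x)-\tfrac12 I\bigr)\,dt$, from which \eqref{1.6} is immediate: $\|W(x)\|\le\int\|\eta(t;x)\|\,dt\le\tfrac12\sum_j(b_j-a_j)$, using $\|\eta\|\le\tfrac12$.

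For the equality case, suppose \eqref{1.6} holds with equality at $x_0$. The second inequality above forces $\|\eta(t;x_0)\|=\tfrac12$ for a.e.\ $t$ in the gaps, so $\xi(t;x_0)$ has eigenvalues $0,1$, i.e.\ is a rank-one projection a.e.; the first (an integral triangle inequality) forces a single unit vector $u$ to be the eigenvector for eigenvalue $1$ of $\xi(t;x_0)$ for a.e.\ $t$, so in fact $\xi(t;x_0)=uu^t=:P_{\alpha_0}$ is one fixed projection a.e.\ on $\bigcup_j(a_j,b_j)$. That $\xi$ is a projection a.e.\ already excludes absolutely continuous spectrum in the gaps. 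To exclude eigenvalues I will use Proposition \ref{P1.1}(c): an eigenvalue in the interior of a gap is a pole of $M$, across which the relevant eigenphase of $M$ jumps by $\pi$, switching $\xi$ from $P_\alpha$ to the orthogonal projection $I-P_\alpha$, contradicting $\xi(\cdot;x_0)\equiv P_{\alpha_0}$ on that gap. Hence $\sigma(L)\cap(a_j,b_j)=\emptyset$ for all $j$, i.e.\ $W\in\mathcal R_0(E)$. I expect this to be the main obstacle: the integral bound is blind to the measure-zero point spectrum, so ruling out gap eigenvalues genuinely needs the finer information in Proposition \ref{P1.1}(c), and the rigidity argument identifying the common eigenvector $u$ must be made precise.

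Finally, for sharpness I construct the extremal operator from spectral data. Fix a reflection $R=2P_{\alpha_0}-I$ (so $R=R^t$, $R^2=I$, $\tr R=0$) and prescribe, at the base point $x_0$, $\xi(t;x_0)=\tfrac12 I$ on $E$ and $\xi(t;x_0)=P_{\alpha_0}$ on the gaps. The reconstructed logarithm $\log M(z;x_0)=\tfrac{i\pi}{2}I+\tfrac12 R\sum_j\log\frac{b_j-z}{a_j-z}$ exponentiates, using $R^2=I$, to a matrix Herglotz function with $M=M^t$ and $\det M=-1$; one checks that $M(t)=i\cdot(\text{positive definite})$ on $E$ (so $\xi=\tfrac12 I$ there), that $M(t)$ is real with no poles throughout the gaps (so $\xi$ is the projection $P_{\alpha_0}$ and $\sigma\subseteq E$), and that its $1/z$-coefficient gives $W(x_0)=\tfrac12\bigl(\sum_j(b_j-a_j)\bigr)R$, whence \eqref{1.6} holds with equality. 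Invoking the inverse spectral theory and the $M$-matrix characterization for Dirac operators \cite{ClGes,GesSim,LevSar} to realize this $M$ as the $M$-matrix of some $W\in\mathcal R_0(E)$, and translating in $x$, handles an arbitrary $x_0$.
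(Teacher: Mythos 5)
Your overall route is the paper's own: the high-energy expansion of $\log M$ matched against \eqref{expM} yields the trace formula $W(x_0)=\sum_j\int_{a_j}^{b_j}\bigl(\xi(t)-\tfrac12\bigr)\,dt$, the bound \eqref{1.6} follows from $\|\xi-\tfrac12\|\le\tfrac12$, the convexity/rigidity argument correctly forces $\xi\equiv P_{\alpha_0}$ a.e.\ on the gaps in the equality case, and the sharpness construction from prescribed spectral data is the intended one. Up to that point the proposal is sound.

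The genuine gap is the sentence ``that $\xi$ is a projection a.e.\ already excludes absolutely continuous spectrum in the gaps.'' This is exactly the converse of Proposition \ref{P1.1}(c), and the introduction of the paper warns that this converse fails badly: $\xi(t)=P$ only tells you $M(t)=e^{S(t)+i\pi P}$ with $S(t)=\Re\log M(t)$ real symmetric, and unless $S(t)$ commutes with $P$ this exponential need not be real, so $\Im M(t)$ can be nonzero and a.c.\ spectrum is not excluded. Your eigenphase-jump treatment of gap eigenvalues is likewise only heuristic, and singular continuous spectrum in the gaps is not addressed at all. The repair uses a piece of information you already derived but did not exploit: the same asymptotic matching that gives the trace formula forces the additive constant $A$ in \eqref{expM} to vanish, so once $\xi=\tfrac12$ on $E$ and $\xi\equiv P$ on the gaps the \emph{entire} function is determined, $\log M(z)=\tfrac{i\pi}{2}+\bigl(\sum_j\log\tfrac{b_j-z}{a_j-z}\bigr)\bigl(P-\tfrac12\bigr)$. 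This matrix is normal, so the commutation obstruction disappears; it continues holomorphically through each gap with eigenvalue imaginary parts $\pi$ and $0$ there, hence $M(t)$ is real and pole-free on every $(a_j,b_j)$, and all spectrum in the gaps (absolutely continuous, point, and singular continuous) is excluded in one stroke. With that correction your argument coincides with the paper's proof.
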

Here, $\| W(x)\|=\sqrt{ p^2(x)+q^2(x)}$ denotes the operator norm of $W(x)$.

If $n=1$, so $E=\R\setminus (a,b)$, then $\|W(x)\|=(b-a)/2$ for all $W\in\mathcal R_0(E)$ and $x\in\R$, and each $W\in\mathcal R_0(E)$ is constant.
This slightly strengthened version of Theorem \ref{T1.2} was obtained in \cite{RemZ}, by different methods. The present proof is simpler.

However, if $n>1$, a given $W\in\mathcal R_0(E)$ need not realize the bound \eqref{1.6} at any $x\in\R$
because orbits under the shift map $W(x)\mapsto W(x+a)$ need not be dense in $\mathcal R_0(E)$,
and (of course) the map $W\mapsto \|W(0)\|$ is no longer constant on $\mathcal R_0(E)$ when $n>1$.
\section{Matrix valued logarithms and Krein functions}
This section presents a quick review of material that can also be found in other sources such as \cite{GesTse} in one form or another,
with a view towards our needs here.

For a complex number
$w\in\Omega\equiv\C\setminus \{-iy: y\ge 0\}$, we define $\log w$ as the holomorphic function on this domain with $e^{\log w}=w$, $\log 1=0$.
So in particular $0<\Im\log w<\pi$ for $w\in\C^+$.

Having fixed this branch of the logarithm function, we then have available a well defined matrix $\log A$ for any $A\in\C^{2\times 2}$ with $\sigma(A)\subseteq\Omega$.
It satisfies $(\log A)v=(\log\lambda)v$ if $Av=\lambda v$. This property determines $\log A$ if $A$ is diagonalizable and could serve as the definition
of $\log A$ in this case. The general case can be
handled by approximation or a similar procedure, using the Jordan normal form. We have $e^{\log A}=A$, and here we define the matrix exponential
as usual by its power series.

In particular, since $\Im M(z)>0$, so $\sigma(M(z))\subseteq\C^+$, we may use this matrix logarithm for $A=M(z)$. For such matrices $A$,
with spectrum in the upper half plane, we can also compute $\log A$ as
\begin{equation}
\label{2.1}
\log A = \int_0^{\infty} \left( \frac{t}{t^2+1}-(t+A)^{-1} \right) \, dt ,
\end{equation}
as proposed in \cite{GesTse}. This formula works because the integral evaluates $\log w$ correctly if we plug in a number $w=A\in\C^+$. Representation \eqref{2.1}
is useful here because it shows that $\log M(z)$ is holomorphic on $z\in\C^+$ and $\Im\log M(z)>0$ there. Moreover, there is a similar formula for $i\pi-\log A$,
which will show that $\Im\log M(z)<\pi$.

As anticipated, we now define the \textit{Krein function }$\xi(t)$ as
\[
\xi(t) = \frac{1}{\pi}\lim_{y\to 0+}\Im\log M(t+iy) .
\]
The limit will exist for almost all $t\in\R$. Of course, $M$ has the same property, and if $M(t)=\lim M(t+iy)$ does exist, then also $\xi(t)=(1/\pi)\, \Im\log M(t)$
(though we cannot use \eqref{2.1} to compute the logarithm if $M(t)$ has real spectrum). Recall here that $\det M=-1$, so we still have $\sigma(M(t))\subseteq\Omega$.

The above discussion shows that $0\le \xi(t)\le 1$. Moreover, $\xi^t=\xi$ because $M$ and thus also $\log M$ have this property.

We can deduce the additional properties of $\xi$ listed in Proposition \ref{P1.1} most conveniently
from the following elementary description of the matrix logarithm.
\begin{Lemma}
\label{L2.1}
Suppose that $\sigma(A)\subseteq\Omega$. Then $B=\log A$ is the unique matrix satisfying $e^B=A$, $\sigma(B)\subseteq \{ z: -\pi/2<\Im z<3\pi/2 \}$.
\end{Lemma}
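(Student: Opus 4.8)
The plan is to separate the two claims. Existence is immediate from the way $\log A$ was defined: the prescription $(\log A)v=(\log\lambda)v$ for $Av=\lambda v$, extended through the Jordan form, gives $e^{\log A}=A$ and $\sigma(\log A)=\{\log\lambda:\lambda\in\sigma(A)\}$. Since every $\lambda\in\Omega$ satisfies $\Im\log\lambda=\arg\lambda\in(-\pi/2,3\pi/2)$ for this branch, we get $\sigma(\log A)\subseteq S:=\{z:-\pi/2<\Im z<3\pi/2\}$. So the content of the lemma is the uniqueness, and the single fact that drives it is that the open strip $S$ has imaginary width exactly $2\pi$, the period of the scalar exponential.

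For uniqueness, I would take any $B$ with $e^B=A$ and $\sigma(B)\subseteq S$ and set $C=B-\log A$. The first step is to note that $B$ and $\log A$ commute. Indeed $B$ commutes with $A=e^B$, and for a $2\times 2$ matrix with $\sigma(A)\subseteq\Omega$ one can write $\log A=c_0I+c_1A$ — solve $c_0+c_1\lambda_j=\log\lambda_j$ at the eigenvalues, matching in addition the derivative $1/\lambda$ when the eigenvalue is repeated — so $\log A$ is a polynomial in $A$ and therefore commutes with anything that commutes with $A$, in particular with $B$. Consequently $e^{C}=e^{B}e^{-\log A}=AA^{-1}=I$, where $A$ is invertible because $0\notin\Omega$.

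The second step is to identify $\sigma(C)$. Commuting matrices are simultaneously triangularizable, and in such a basis the diagonal of $C$ is the difference of the diagonals of $B$ and $\log A$; hence each eigenvalue of $C$ has the form $\beta-\beta_0$ with $\beta\in\sigma(B)\subseteq S$ and $\beta_0\in\sigma(\log A)\subseteq S$, so its imaginary part lies strictly between $-2\pi$ and $2\pi$. On the other hand $e^{C}=I$ forces $\sigma(C)\subseteq 2\pi i\Z$, and the only such value in that range is $0$, so $C$ is nilpotent. Finally $e^{C}-I=C\,(I+C/2!+C^2/3!+\cdots)$, and the second factor is $I$ plus a nilpotent matrix, hence invertible; from $e^{C}=I$ I then conclude $C=0$, that is, $B=\log A$.

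The step I expect to need the most care is securing the commutativity of $B$ and $\log A$, since the clean identity $e^{C}=e^{B}e^{-\log A}$ depends on it; this is exactly what lets me avoid splitting into the diagonalizable and single-Jordan-block cases and instead run one uniform spectral argument. Everything downstream is just the observation that $S$ is a fundamental domain for $\exp$ modulo $2\pi i$, together with the elementary fact that a nonzero nilpotent exponent cannot produce $e^{C}=I$.
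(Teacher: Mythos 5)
Your argument is correct, but it takes a genuinely different route from the paper's. The paper's uniqueness proof rests on the observation that $e^C$ is diagonalizable if and only if $C$ is, and then splits into cases: if $A$ is diagonalizable, $B$ must be too and is pinned down by its eigendata and the strip condition; if not, $B=\lambda+N$ with $N^2=0$, and $e^B=e^\lambda(1+N)$ again determines $B$. You instead run a single uniform argument: $\log A$ is a polynomial in $A$ (Lagrange--Hermite interpolation, degree $\le 1$ in the $2\times2$ case), hence commutes with any $B$ satisfying $e^B=A$, so $C=B-\log A$ satisfies $e^C=I$; simultaneous triangularization confines $\Im\sigma(C)$ to $(-2\pi,2\pi)$, while $e^C=I$ forces $\sigma(C)\subseteq 2\pi i\Z$, so $C$ is nilpotent, and the factorization $e^C-I=C(I+C/2!+\cdots)$ with invertible second factor kills $C$. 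All steps check out (including the spectral mapping uses and the invertibility of $A$ from $0\notin\Omega$). What your version buys is that it avoids the diagonalizable/non-diagonalizable dichotomy and generalizes verbatim to $n\times n$ matrices, where the strip $\{-\pi/2<\Im z<3\pi/2\}$ being a width-$2\pi$ fundamental domain for $\exp$ is the only input; the paper's sketch is shorter but its second case is tailored to dimension two. The one step worth stating carefully in a write-up is exactly the one you flag: that $\log A$, as defined via the Jordan form, really is the interpolating polynomial in $A$ (matching $\log\lambda$ at the eigenvalues and $1/\lambda$ at a defective repeated eigenvalue), since the whole identity $e^C=e^Be^{-\log A}$ hinges on that commutativity.
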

\begin{proof}[Sketch of proof]
The above discussion has shown that $B=\log A$ has these properties. To prove that there is only one such $B$ for a given $A$, notice that
$e^C$ is diagonalizable if and only if $C$ is. This observation immediately gives us uniqueness of $B$ when $A$ is diagonalizable. It also implies that if $A$ is not diagonalizable,
then $B$ must be of the form $B=\lambda+N$, $N^2=0$. In that case, since $\lambda=\lambda I$ and $N$ commute, $e^B=e^{\lambda}(1+N)$, and again
$B$ is determined by $A$.
\end{proof}
\section{Proof of Proposition \ref{P1.1}}
Part (a) is immediate from the formula
\[
-1=\det M(t)=\det e^{\log M(t)} = e^{\tr\log M(t)} ,
\]
since $\Im\tr \log M(t)=\pi\,\tr\xi(t)$ and $0\le\tr\xi(t)\le 2$.

To prove part (b), recall that \eqref{refless} is equivalent to
\begin{equation}
\label{reflessM}
\Re M(t) = 0 ;
\end{equation}
compare \cite[Proposition 7.8]{Rembook}, \cite[Lemma 8.1]{Teschl}. We can of course restrict our attention to those $t\in E$
for which $M(t)$ exists. Clearly, if $\xi(t)=1/2$, then $M(t)=e^{\log M(t)}=e^{A(t)+i\pi/2}=ie^{A(t)}$ satisfies \eqref{reflessM}. Conversely,
if \eqref{reflessM} holds, then $M(t)=iB$ with $B>0$ (recall again that $\det M(t)=-1$), so $B=e^A$ for some self-adjoint matrix $A$, and thus $\log M=A+i\pi/2$
by Lemma \ref{L2.1}.

Similarly, in the situation of part (c), $\Im M(t)=0$, so $M(t)=-\lambda P +(1/\lambda )(1-P)$ for some $\lambda>0$ and some projection $P$. Lemma \ref{L2.1}
thus shows that $\log M(t)=(\log\lambda+i\pi)P-\log\lambda(1-P)$, and in particular $\xi=P$ is the projection onto the negative eigenspace of $M(t)$.
\section{Proof of Theorem \ref{T1.2}}
If $W\in\mathcal R(E)$, $\tr W=0$, then $W(x)$ is real analytic \cite[Theorem 4.1]{RemZ}.
Moreover, $m_{\pm}(z)$ and $M(z)$ are holomorphic at $z=\infty$ \cite[Lemma 1.2]{RemZ}, and then \cite[eqn.\ (5.6)]{RemZ} says that
$m_+(z)=i -(q(0)+ip(0))/z + O(1/z^2)$, and here $p,q$ are the entries of $W$, as in \eqref{1.7}.
While this is not explicitly done in \cite{RemZ}, of course the same treatment applies to $m_-$, and it shows that similarly $m_-(z)=i+(q(0)-ip(0))/z +O(1/z^2)$.
In terms of $M$, this means that
\[
M(z) = i \left( 1- \frac{1}{z}W(0) + O(1/z^2) \right) .
\]
So, since the factor $i$ commutes with everything and $\log (1+A)$ can be computed in terms of its power series for $\|A\|<1$, we have
\begin{equation}
\label{4.1}
\log M(z) = \frac{i\pi}{2} -\frac{1}{z}W(0)+O(1/z^2)  .
\end{equation}
On the other hand, we can also obtain an asymptotic formula from \eqref{expM}. Write $\xi=\xi-1/2+1/2$ and recall that $\xi=1/2$ is the Krein function
of $M(z)=i$ and $\xi=1/2$ on $E$ by Proposition \ref{P1.1}(b). Hence
\begin{align*}
\log M(z) & = \frac{i\pi}{2} + \sum_{j=1}^n \int_{a_j}^{b_j} \frac{\xi(t)-1/2}{t-z}\, dt \\
& = \frac{i\pi}{2} -\frac{1}{z} \sum_{j=1}^n \int_{a_j}^{b_j} (\xi(t)-1/2)\, dt +O(1/z^2) ;
\end{align*}
notice that while \eqref{expM} would normally deliver an extra constant matrix $B$ on the right-hand side, we immediately see from
the asymptotic expansions that this equals zero here. Comparison with \eqref{4.1} then shows that
\begin{equation}
\label{4.2}
W(0) = \sum_{j=1}^n \int_{a_j}^{b_j} (\xi(t)-1/2)\, dt .
\end{equation}
This only gives $W$ at $x=0$, but it actually suffices to discuss the claims of Theorem \ref{T1.2} for $x=x_0=0$ since
$\mathcal R(E)$ and $\mathcal R_0(E)$ are invariant under shifts $W(x)\mapsto W(x+a)$.

Since $\|P-1/2\|=1/2$ for any projection, the bound of Theorem \ref{T1.2} is immediate from \eqref{4.2} and Proposition \ref{P1.1}(c).

To prove the final claims of Theorem \ref{T1.2}, observe that if $Q$ is in the closed convex hull of the projections $\{ P\}$ but not a projection itself,
then $\|Q-1/2\|<1/2$ since this matrix is self-adjoint and has trace zero and its norm is $\le 1/2$. Hence \eqref{4.2} also shows that $\|W(0)\|<(1/2)\sum (b_j-a_j)$
unless $\xi(t)\equiv P$ on $t\notin E$. We now finish the proof by showing that if $\xi$ is of this form, so $\xi=1/2$ on $E$, $\xi=P$ on $E^c$, then
$M(z)$, defined via \eqref{expM} with $A=0$, is the $M$ matrix of a $W\in\mathcal R_0(E)$.

Observe first of all that then $M$ is holomorphic near $z=\infty$ and $M(\infty)=i$; compare also \eqref{4.4} below.
This implies that $m_{\pm}$ have the same properties, and then we can
conclude as in the proof of \cite[Theorem 3.2]{RemZ} that $M$ is the $M$ matrix of a Dirac operator $L=L_W$.
Recall in this context that $m_{\pm}(z)$ can be recovered from $M(z)$, for example as the eigenvectors of $MJ$, as follows:
\[
M(z)J \begin{pmatrix} \pm m_{\pm}(z) \\ 1 \end{pmatrix} = \mp \begin{pmatrix} \pm m_{\pm}(z) \\ 1 \end{pmatrix} .
\]
Also, we clearly have $W\in\mathcal R(E)$, by Proposition \ref{P1.1}(b).

To show that $W\in\mathcal R_0(E)$, we again compute
\begin{align}
\nonumber
\log M(z) & = \frac{i\pi}{2} + \sum_{j=1}^n \int_{a_j}^{b_j} \frac{\xi(t)-1/2}{t-z}\, dt \\
\label{4.4}
& = \frac{i\pi}{2} + \sum_{j=1}^n \log \frac{b_j-z}{a_j-z} \left( P-\frac{1}{2}\right) .
\end{align}
This matrix is normal, with eigenvalues
\begin{equation}
\label{4.3}
\lambda_{\pm}(z)=\frac{i\pi}{2} \pm \frac{1}{2}\sum_{j=1}^n \log \frac{b_j-z}{a_j-z} ,
\end{equation}
and thus $M(z)$ will also be normal, with eigenvalues $e^{\lambda_{\pm}(z)}$.
We also see from \eqref{4.4} that $\log M(z)$ and thus also $M(z)$ itself have holomorphic continuations through each gap $(a_j,b_j)$. For $z=t\in (a_j,b_j)$,
the corresponding logarithm from \eqref{4.3} has a negative argument, while all the other ones have positive arguments. Thus $\Im\lambda_+(t)=\pi$,
$\Im\lambda_-(t)=0$, and it follows that $\Im M(t)=0$. (The potential objection that was mentioned in the introduction does not apply here since $M(t)$
is normal, so $\Re\log M(t)$, $\Im\log M(t)$ do commute.) Hence $(a_j,b_j)\cap\sigma(L)=\emptyset$; in other words,
$\sigma(L)\subseteq E$, and thus $L\in\mathcal R_0(E)$, as claimed.

\end{document}